\newtheorem*{corollary*}{Corollary}
\newtheorem*{remark*}{Remark}
\newtheorem*{comments*}{Comments}
\newtheorem{definition}{Definition}[section]
\newtheorem{theorem}[definition]{Theorem}
\newtheorem{lemma}[definition]{Lemma}
\newtheorem{corollary}[definition]{Corollary}
\newtheorem{remark}[definition]{Remark}
\newcommand{\jacobi}[2]{\ensuremath{\left(\frac{#1}{#2}\right)}}
\newcommand{\consti}{\ensuremath{\mathrm{i}}}
\newcommand{\pr}{\ensuremath{\mathit{pr}}}
\newcommand{\newo}{\ensuremath{\mathcal{O}}}
\newcommand{\newz}{\ensuremath{\mathbb{Z}}}
\begin{document}

\begin{frontmatter}






\title{Disquisitiones Arithmetic\ae\ and online sequence $A$108345}
\author{Paul Monsky}

\address{Brandeis University, Waltham MA  02454-9110, USA\\  monsky@brandeis.edu }

\begin{abstract}
Let $g$ be the element $\sum_{n\ge 0}x^{n^{2}}$ of $A=\newz/2[[x]]$, and $B$ consist of all $n$ for which the coefficient of $x^{n}$ in $\frac{1}{g}$ is 1. (The elements of $B$ are the entries 0, 1, 2, 3, 5, 7, 8, 9, 13, \ldots in $A$108345; see \cite{3}.)  In \cite{1} it is shown that the (upper) density of $B$ is $\le\frac{1}{4}$, and it is conjectured that $B$ has density 0. This note uses results of Gauss on sums of 3 squares to show that the subset of $B$ consisting of $n\not\equiv 15\pod{16}$ has density 0. The final section gives some computer calculations, made by Kevin O'Bryant, indicating that, pace \cite{1}, $B$ has density $\frac{1}{32}$.

\begin{comments*}
The note is drawn from my answers, on Mathoverflow, to questions asked by O'Bryant and me.
\end{comments*}
\end{abstract}
\maketitle

\end{frontmatter}


\section{Introduction}
\label{introduction}

I begin with simple derivations of some results from \cite{1}. Let $g$ be the element $1+x+x^{4}+x^{9}+\cdots$ of $A=\newz/2 [[x]]$. Write $\frac{1}{g}$ as $\sum b_{i}x^{i}$ with the $b_{i}$ in $\newz/2$, and let $B$ consist of all $n$ with $b_{n}=1$.

\begin{theorem}
\label{theorem1.1}
If $n$ is even, $n$ is in $B$ if and only if $\frac{n}{2}$ is a square.
\end{theorem}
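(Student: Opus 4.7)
The plan is to separate $g$ and $h=1/g$ into their even-degree and odd-degree parts in $A=\mathbb{Z}/2[[x]]$, and then extract the even part of $h$ in closed form. Write $g = g_e + g_o$ and $h = h_e + h_o$, where $g_e, h_e$ carry only terms $x^i$ with $i$ even, and $g_o, h_o$ only terms with $i$ odd. Since the coefficient of $x^n$ (for even $n$) in $h$ is the same as its coefficient in $h_e$, the theorem is equivalent to the clean identity $h_e = g^2$.

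The crucial observation is that in characteristic $2$ we have $g^2 = g(x^2) = \sum_n x^{2n^2}$, which is the Frobenius acting term by term. Applying this to the defining series for $g$, we note that $g_e(x) = \sum_{k\ge 0} x^{(2k)^2} = \sum_{k\ge 0} x^{4k^2} = g(x^4) = g^4$, since the even squares are precisely the squares of even integers. So $g_e = g^4$, a cute identity forced by characteristic $2$.

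From the equation $gh = 1$, I split into even and odd parts (noting that the even part of a product of two series of like parity is the only nonzero contribution to even exponents). This yields the system
\begin{align*}
g_e h_e + g_o h_o &= 1,\\
g_e h_o + g_o h_e &= 0.
\end{align*}
Because $g_e = 1 + x^4 + \cdots$ has constant term $1$, it is a unit in $A$; the second equation then gives $h_o = (g_o/g_e)\,h_e$. Substituting into the first yields $h_e(g_e^2 + g_o^2) = g_e$, and since $g_e + g_o = g$ and we are in characteristic $2$, this is $h_e \cdot g^2 = g_e = g^4$. Therefore $h_e = g^2 = \sum_n x^{2n^2}$.

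Reading off coefficients: the coefficient of $x^{2m}$ in $h$ equals $1$ if and only if $2m = 2n^2$ for some $n$, i.e., if and only if $m = n/2$ is a perfect square. This establishes the theorem. I do not anticipate a serious obstacle here: everything is a short algebraic manipulation, and the only non-obvious ingredient is recognizing the identity $g_e = g^4$, which then drives the rest of the computation.
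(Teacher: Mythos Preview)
Your proof is correct and takes essentially the same approach as the paper's: both identify $g_e = g^4$ and use it to show that the even part of $1/g$ equals $g^2$. The paper's presentation is slightly slicker, observing that the even-part projection is $\mathbb{Z}/2[[x^2]]$-linear and that $1/g^2\in\mathbb{Z}/2[[x^2]]$, so $\pr(1/g)=(1/g^2)\pr(g)=g^{-2}g^4=g^2$ in one line, but this is exactly the content of your two-equation system.
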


\begin{proof}
Let $R\subset A$ be $\newz/2 [[x]]$. As $R$-module, $A$ is the direct sum of $R$ and $xR$. Let $\pr: A\rightarrow R$ be the $R$-linear map which is the identity on $R$ and sends $xR$ to 0. Since $g^{2}$ is in $R$, so is $\frac{1}{g^{2}}$. Now $\pr(g)=1+x^{4}+x^{16}+x^{36}+\cdots = g^{4}$.  So $\pr\left(\frac{1}{g}\right)=\frac{1}{g^{2}}\pr(g)=g^{2}$. This is precisely the statement of the theorem.
\end{proof}

\begin{theorem}
\label{theorem1.2}
If $n\equiv 1\pod{4}$, $n$ is in $B$ if and only if the number of ways of writing $n$ as $(\mathit{square})+4(\mathit{square})$ is odd.
\end{theorem}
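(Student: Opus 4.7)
The plan is to refine the decomposition idea of Theorem~\ref{theorem1.1}, working modulo $4$ instead of modulo $2$. Let $S = \newz/2\,[[x^4]] \subset A$; then $A = S \oplus xS \oplus x^2 S \oplus x^3 S$ as $S$-modules, and every $a \in A$ admits a unique expansion $a = H_0 + xH_1 + x^2 H_2 + x^3 H_3$ with $H_i \in S$. The coefficients of $x^N$ for $N \equiv 1 \pmod{4}$ in $a$ are read off from $xH_1$. Writing $\tfrac{1}{g} = F_0 + xF_1 + x^2 F_2 + x^3 F_3$, my goal is to identify $F_1$ and then interpret its coefficients combinatorially.

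First I would separate the mod-$4$ components of $g$ and $g^2$. Since $n^2 \equiv 0 \pmod{4}$ for $n$ even and $n^2 \equiv 1 \pmod{4}$ for $n$ odd, one gets $g = g^4 + xA$ with $A = \sum_{m\ge 0} x^{4m^2 + 4m} \in S$ (using $g(x^4) = g^4$ in characteristic $2$). Squaring in characteristic $2$ then gives $g^2 = g^8 + x^2 A^2$. Since $g^4 \in S$, one may write $\tfrac{1}{g} = g^3/g^4$, so the $S$-component decomposition of $\tfrac{1}{g}$ is obtained from that of $g^3$ by dividing each component by $g^4$. The multiplication
\[
g^3 = (g^8 + x^2 A^2)(g^4 + xA) = g^{12} + x\, g^8 A + x^2\, g^4 A^2 + x^3\, A^3
\]
is already in mod-$4$ form (each of the four summands factors cleanly as a power of $x$ at most $3$ times an element of $S$), so reading off the coefficient of $x$ gives $H_1 = g^8 A$ and hence $F_1 = g^4 A$.

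To finish, I would expand
\[
x F_1 = \biggl(\sum_{n\ge 0} x^{4n^2}\biggr)\biggl(\sum_{m\ge 0} x^{(2m+1)^2}\biggr) = \sum_{m,n\ge 0} x^{(2m+1)^2 + 4n^2},
\]
so for $N \equiv 1\pmod{4}$ the coefficient $b_N$ equals, modulo $2$, the number of pairs $(m,n)$ with $m,n\ge 0$ and $N = (2m+1)^2 + 4n^2$. Any representation $N = a^2 + 4b^2$ forces $a^2 \equiv 1 \pmod{4}$, hence $a$ is odd; this identifies the count above with the total number of representations of $N$ as (square)$\,+\,4\,$(square), which is what the theorem asserts.

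The step I expect to be the crux is the clean computation of $g^2$ and $g^3$ modulo $4$; it is essential that characteristic $2$ collapses the odd-part contribution of $g^2$ into the perfect square $x^2 A^2$, so that $(g^8 + x^2 A^2)(g^4 + xA)$ splits transparently into the four $S$-components without further cancellation. Everything else is bookkeeping of the mod-$4$ graded structure.
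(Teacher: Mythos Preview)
Your argument is correct. The mod-$4$ decomposition $g=g^{4}+xA$, the squaring $g^{2}=g^{8}+x^{2}A^{2}$, and the resulting clean split of $g^{3}$ into the four $S$-components all check out, and your identification $xF_{1}=g^{4}\cdot xA=\sum_{m,n\ge 0}x^{(2m+1)^{2}+4n^{2}}$ gives exactly the desired count.

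It is, however, a genuinely different route from the paper's. The paper does not project modulo~$4$ at all; instead it writes $\tfrac{1}{g}=g\cdot\tfrac{1}{g^{2}}$, so that $b_{n}=\sum_{s^{2}+2k=n}b_{k}\pmod 2$, observes that for $n\equiv 1\pmod 4$ the square must be odd and hence $k$ even, and then invokes Theorem~\ref{theorem1.1} (which says $b_{k}=1$ for even $k$ iff $k/2$ is a square) to turn the sum into a count of representations $n=(\text{odd square})+4(\text{square})$. So the paper bootstraps from the mod-$2$ result already in hand, while you redo the projection argument of Theorem~\ref{theorem1.1} one level deeper, working with $\tfrac{1}{g}=g^{3}/g^{4}$ instead of $\tfrac{1}{g}=g/g^{2}$. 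Your approach is self-contained and makes the $n\equiv 1\pmod 4$ piece of $\tfrac{1}{g}$ completely explicit (indeed it simultaneously yields $F_{0}=g^{8}$, recovering Theorem~\ref{theorem1.1}); the paper's is shorter and illustrates the recursive pattern $\tfrac{1}{g}=g\cdot g^{2}\cdots g^{2^{r-1}}\cdot\tfrac{1}{g^{2^{r}}}$ that it reuses for Theorem~\ref{theorem1.4} and in Section~\ref{section3}.
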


\begin{proof}
$\frac{1}{g}=g\cdot\frac{1}{g^{2}}$. So the coefficient of $x^{n}$ in $\frac{1}{g}$ is the number of ways, modulo 2, of writing $n$ as $(\mathit{square})+2k$ with $k$ in $B$. Since $n\equiv 1\pod{4}$, the square is also $\equiv 1\pod{4}$, and $k$ is even. Now use Theorem \ref{theorem1.1}.
\end{proof}

\begin{theorem}
\label{theorem1.3}
The number of $n$ in $B$ that are $\le x$ and $\not\equiv 3\pod{4}$ is $O(x/\!\log(x))$.
\end{theorem}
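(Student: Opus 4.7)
The plan is to partition the count by $n\bmod 4$. For $n\equiv 0$ or $n\equiv 2\pod{4}$, Theorem~\ref{theorem1.1} forces $n/2$ to be a perfect square, so only $O(\sqrt{x})$ such $n\le x$ can lie in $B$. All the remaining work is in the case $n\equiv 1\pod{4}$.

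For such $n$, Theorem~\ref{theorem1.2} says $n\in B$ iff $R(n):=\#\{(s,t)\in\newz_{\ge 0}^{2}:s^{2}+4t^{2}=n\}$ is odd. Since $n$ is odd, $s$ must be odd, and a short bookkeeping of signed representations of $n$ as a sum of two squares (separating the odd coordinate from the even one) yields
\[ r_{2}(n)=8R(n)-4\,[n\text{ is a perfect square}], \]
where $r_{2}(n)$ is the total number of ordered integer representations. I would then invoke Jacobi's formula $r_{2}(n)=4(d_{1}(n)-d_{3}(n))$, with $d_{j}(n)$ the number of divisors of $n$ in the residue class $j\pod{4}$, together with its multiplicative expansion: writing $n=\prod p^{a_{p}}\cdot\prod q^{b_{q}}$ with $p\equiv 1$ and $q\equiv 3\pod{4}$,
\[ d_{1}(n)-d_{3}(n)=\prod_{q}[b_{q}\text{ even}]\cdot\prod_{p}(a_{p}+1). \]

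Combining these and asking when $R(n)$ is odd reduces to a $2$-adic analysis of $\prod(a_{p}+1)$ modulo $4$. The expected outcome is that $n\in B$ with $n\equiv 1\pod{4}$ forces one of two shapes: (a) $n$ is itself a perfect square, or (b) $n=pm^{2}$ for some prime $p\equiv 1\pod{4}$ and some integer $m\ge 1$. The number of perfect squares $\le x$ is $O(\sqrt{x})$, and the count in case (b) is bounded by $\sum_{m\le\sqrt{x}}\pi(x/m^{2})$; splitting this sum at $m=x^{1/4}$ and using $\pi(y)=O(y/\log y)$ gives $O(x/\log x)$ from the small-$m$ range and $O(x^{3/4})$ from the tail, summing to $O(x/\log x)$ overall.

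I expect the main obstacle to be the middle step: carrying out the parity analysis cleanly enough to extract the dichotomy ``square or prime-times-square.'' Once that characterization is in hand, the final bound is a routine application of the prime-counting function, and the residues $0$ and $2\pod{4}$ are immediate from Theorem~\ref{theorem1.1}.
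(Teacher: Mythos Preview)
Your proposal is correct and follows essentially the same route as the paper: the paper phrases the $n\equiv 1\pod 4$ case as counting ideals of norm $n$ in $\newz[i]$, while you use $r_{2}(n)$ and Jacobi's formula, but since the ideal count equals $d_{1}(n)-d_{3}(n)$ these are the same computation, and both arrive at the characterization ``square times a prime $\equiv 1\pod 4$.'' The parity analysis you flag as the main obstacle is exactly what the paper dismisses as ``standard facts about $\newz[i]$,'' and your final summation $\sum_{m}\pi(x/m^{2})=O(x/\log x)$ is just a more explicit version of the paper's one-line appeal to $\pi(x)=O(x/\log x)$.
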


\begin{proof}
In view of Theorem \ref{theorem1.1} we may restrict our attention to $n$ that are $\equiv 1\pod{4}$ (and that are not squares). If such an $n$ is $s_{1}+4s_{2}$ then $\sqrt{s_{1}}+2\consti\sqrt{s_{2}}$ and $\sqrt{s_{1}}-2\consti\sqrt{s_{2}}$ generate ideals of norm $n$ in $\newz[i]$; since $n$ is not a square, these two ideals are distinct. Since every ideal of norm $n$ comes from exactly one decomposition of $n$ as $(\mathit{square})+4(\mathit{square})$, the number of decompositions of $n$ is $\frac{1}{2}(\mathit{the\ number\ of\ ideals\ of\ norm\ n})$. Standard facts about $\newz[i]$ tell us that this number is odd only when $n$ is the product of a square by a prime $\equiv 1\pod{4}$. Now use the fact that $\pi(x)=O(x/\!\log(x))$.
\end{proof}

\begin{theorem}
\label{theorem1.4}
If $n\equiv 3\pod{8}$, $n$ is in $B$ if and only if the number of ways of writing $n$ as $(\mathit{square})+2(\mathit{square})+8(\mathit{square})$ is odd.
\end{theorem}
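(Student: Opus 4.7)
The plan is to mimic the proof of Theorem \ref{theorem1.2} essentially verbatim, using the factorization $\frac{1}{g} = g\cdot\frac{1}{g^{2}}$. First I would observe that $\frac{1}{g^{2}} = \left(\frac{1}{g}\right)^{2} = \sum b_{i}^{2}x^{2i} = \sum b_{i}x^{2i}$ in $A$, so the coefficient of $x^{2k}$ in $\frac{1}{g^{2}}$ is $b_{k}$. Multiplying by $g$, the coefficient of $x^{n}$ in $\frac{1}{g}$ is
\[
b_{n} = \#\{(s,k):n=s+2k,\ s\text{ a square},\ k\in B\}\pmod{2}.
\]

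Now I would carry out the parity bookkeeping. For $n\equiv 3\pod{8}$, the square $s$ must be odd, hence $s\equiv 1\pod{8}$, so $2k\equiv 2\pod{8}$ and therefore $k\equiv 1\pod{4}$. Crucially, this congruence holds \emph{automatically} for every decomposition $n=s+2k$ with $s$ a square, so Theorem \ref{theorem1.2} applies to each summand. It gives
\[
b_{k} = \#\{(s_{1},s_{2}):k=s_{1}+4s_{2},\ s_{1},s_{2}\text{ squares}\}\pmod{2}.
\]

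Substituting this expansion into the sum for $b_{n}$, and combining $n=s+2k=s+2s_{1}+8s_{2}$, yields
\[
b_{n} = \#\{(s,s_{1},s_{2})\text{ squares}:n=s+2s_{1}+8s_{2}\}\pmod{2},
\]
which is the claim.

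There is essentially no obstacle here beyond the congruence check that forces $k\equiv 1\pod{4}$ so that Theorem \ref{theorem1.2} is applicable; the identity $\frac{1}{g^{2}} = \sum b_{i}x^{2i}$ over $\newz/2$ does all the real work, and the rest is bookkeeping. The substantive content of the paper presumably lies in later theorems, where Gauss's three-squares theorem will be used to analyze the representations $n=s+2s_{1}+8s_{2}$ when $n\equiv 3\pod{8}$.
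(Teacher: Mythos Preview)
Your argument is correct. The only difference from the paper is organizational: the paper uses the single factorization $\frac{1}{g}=g\cdot g^{2}\cdot\frac{1}{g^{4}}$, reads off that $b_{n}$ counts (mod~2) representations $n=(\mathit{square})+2(\mathit{square})+4k$ with $k\in B$, checks mod~8 that $k$ must be even, and invokes Theorem~\ref{theorem1.1} directly. You instead iterate the identity $\frac{1}{g}=g\cdot\frac{1}{g^{2}}$ once, land on $k\equiv 1\pmod 4$, and quote Theorem~\ref{theorem1.2}. Since Theorem~\ref{theorem1.2} was itself proved by exactly that identity followed by Theorem~\ref{theorem1.1}, the two arguments unwind to the same computation; yours is a two-step recursion where the paper does it in one shot.
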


\begin{proof}
$\frac{1}{g}=g\cdot g^{2}\cdot\frac{1}{g^{4}}$. So the coefficient of $x^{n}$ in $\frac{1}{g}$ is the number of ways, modulo 2, of writing $n$ as $(\mathit{square})+2(\mathit{square})+4k$ with $k$ in $B$. Since $n\equiv 3\pod{8}$, congruences mod 8 show that $k$ is even, and we use Theorem \ref{theorem1.1}.
\end{proof}

\section{A density result for $n\equiv 3\pod{8}$}

\begin{lemma}
\label{lemma2.1}
Suppose $n\equiv 3\pod{8}$. Let $R_{1}$ and $R_{2}$ be the number of ways of writing $n$ as $(\mathit{square})+(\mathit{square})+(\mathit{square})$ and as $(\mathit({square}))+2(\mathit{square})$.  If 4 divides $R_{1}$ and $R_{2}$, then $n$ is not in $B$.
\end{lemma}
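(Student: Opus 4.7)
The goal, via Theorem \ref{theorem1.4}, is to show that for $n\equiv 3\pmod{8}$ the count $R_{3}$ of representations of $n$ as $(\text{square})+2(\text{square})+8(\text{square})$ is even. My whole plan rests on establishing the numerical identity
\[
R_{1}+R_{2}=2R_{3}\qquad(n\equiv 3\pmod{8}).
\]
Granted this, $4\mid R_{1}$ and $4\mid R_{2}$ force $4\mid 2R_{3}$, so $R_{3}$ is even, and Theorem \ref{theorem1.4} yields $n\notin B$.

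To prove the identity I will apply the classical linear substitution $(b,c)\mapsto\bigl((b+c)/2,\,(b-c)/2\bigr)$ to representations $a^{2}+b^{2}+c^{2}=n$. Because $n\equiv 3\pmod{8}$ forces $a$, $b$, $c$ all odd, both $(b+c)/2$ and $(b-c)/2$ are integers whose sum $b$ is odd, so they have opposite parity. Writing the even one as $2u$, each such representation is converted to one of the form $a^{2}+2(\text{odd})^{2}+8u^{2}=n$, which is exactly the shape counted by $R_{3}$. The two choices of which of $(b+c)/2$, $(b-c)/2$ is the even one account for the factor $2$ on the right, while the degenerate fibre $b=c$ (equivalently $u=0$ in the image) picks up precisely the representations of $n$ as $(\text{square})+2(\text{square})$ and produces the $R_{2}$ summand on the left.

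The main obstacle will be bookkeeping across counting conventions. The cleanest path I see is first to establish the identity $R_{1}^{\mathrm{sgn}}=2R_{3}^{\mathrm{sgn}}$, where $R_{i}^{\mathrm{sgn}}$ denotes the count over all integer tuples (positive, negative, or zero); for that version the substitution is a bijection with no boundary case to track, and the factor $2$ appears by the $u\leftrightarrow v$ symmetry of $a^{2}+2u^{2}+2v^{2}$. I will then translate back into the non-negative-integer counts implicit in Theorems \ref{theorem1.1}--\ref{theorem1.4}. The conversion uses the simple observation that for $n\equiv 3\pmod{8}$ every entry in any representation counted by $R_{1}$ or $R_{2}$ must be odd, hence nonzero, whereas in a representation $a^{2}+2b^{2}+8c^{2}=n$ counted by $R_{3}$ it is only the last variable $c$ that may vanish. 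Keeping track of that single exceptional vanishing is exactly what yields the $R_{2}$ correction in the final identity $R_{1}+R_{2}=2R_{3}$.
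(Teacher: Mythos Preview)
Your argument is correct and matches the paper's proof in substance: both reduce to the identity $R_{1}+R_{2}=2R_{3}$ via the substitution $(b,c)\mapsto\bigl((b+c)/2,(b-c)/2\bigr)$ on $a^{2}+b^{2}+c^{2}=n$, noting that exactly one of the resulting pair is even and that the degenerate case corresponds to $R_{2}$. The only cosmetic difference is bookkeeping: you pass through signed integer counts and recover the identity from $R_{1}^{\mathrm{sgn}}=2R_{3}^{\mathrm{sgn}}$ together with $R_{1}^{\mathrm{sgn}}=8R_{1}$ and $R_{3}^{\mathrm{sgn}}=8R_{3}-4R_{2}$, while the paper works directly with tuples of squares and counts the fibers of the map $(s_{1},s_{2},s_{3})\mapsto(t_{1},t_{2},t_{3})$, observing they have size $2$ except over $t_{3}=0$.
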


\begin{proof}
In view of Theorem \ref{theorem1.4} it suffices to show that $R_{1}+R_{2}$ is twice the number of ways of writing $n$ as $(\mathit{square})+2(\mathit{square})+8(\mathit{square})$. Suppose $n=s_{1}+s_{2}+s_{3}$ with the $s_{i}$ squares. The $s_{i}$ are odd.  Let $r_{2}$ and $r_{3}$ be square roots of $s_{2}$ and $s_{3}$ with $r_{2}\equiv r_{3}\pod{4}$. Then $n=s_{1}+2\left(\frac{r_{2}+r_{3}}{2}\right)^{2}+8\left(\frac{r_{2}-r_{3}}{4}\right)^{2}=(\mathit{square})+2(\mathit{square})+8(\mathit{square})$, and replacing $r_{2}$ and $r_{3}$ by $-r_{2}$ and $-r_{3}$ gives the same decomposition.  It's easy to see that one gets every decomposition $n=t_{1}+2t_{2}+8t_{3}$ with the $t_{i}$ squares from some triple $(s_{1},s_{2},s_{3})$ in this way. Furthermore if $(s_{1},s_{2},s_{3})\rightarrow(t_{1},t_{2},t_{3})$, then $(s_{1},s_{3},s_{2})\rightarrow$ the same $(t_{1},t_{2},t_{3})$.  It follows that the fiber over a fixed $(t_{1},t_{2},t_{3})$ consists of 2 elements except at those points where $t_{3}=0$. But such a point corresponds to a decomposition of $n$ as $(\mathit{square})+2(\mathit{square})$.
\end{proof}

\begin{lemma}
\label{lemma2.2}
Suppose $n\equiv 3\pod{8}$ and is divisible by 3 or more different primes. Then the number of ways of writing $n$ primitively as $(\mathit{square})+(\mathit{square})+(\mathit{square})$ is divisible by 4.
\end{lemma}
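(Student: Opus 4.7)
The plan is to relate the primitive-representation count $R_{1}^{\mathrm{prim}}(n)$ to a classical form class number via a theorem of Gauss, and then use the theory of genera to force divisibility by $4$.

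Because $n \equiv 3\pod{8}$ and every square is $0$, $1$, or $4$ mod $8$, any decomposition $n = a^{2}+b^{2}+c^{2}$ must have $a,b,c$ all odd, hence nonzero. The number of signed primitive ordered triples $(a,b,c)\in\newz^{3}$ with $a^{2}+b^{2}+c^{2}=n$ and $\gcd(a,b,c)=1$ is therefore $8\,R_{1}^{\mathrm{prim}}(n)$, since each unordered sign pattern $(a_{1}^{2},a_{2}^{2},a_{3}^{2})$ arises from $2^{3}$ signed triples and primitivity of the squares is equivalent to primitivity of the $a_{i}$. Gauss's classical theorem (\emph{Disquisitiones}, art.~291) identifies this signed count with $24\,h(-n)$, where $h(-n)$ is the form class number of positive-definite binary quadratic forms of discriminant $-n$. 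Hence $R_{1}^{\mathrm{prim}}(n) = 3\,h(-n)$, and it suffices to prove $4 \mid h(-n)$.

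For $n$ squarefree, $-n \equiv 5\pod{8}$ is a fundamental discriminant; Gauss's theory of genera then gives the $2$-rank of the class group of discriminant $-n$ as $k - 1$, where $k$ is the number of distinct primes dividing $n$. The hypothesis $k \ge 3$ forces $4 \mid h(-n)$. For general $n$, write $n = m^{2}n_{0}$ with $n_{0}$ squarefree; since $m$ is odd, $n_{0} \equiv 3\pod{8}$ and $n_{0}$ shares its set of prime divisors with $n$, so $n_{0}$ also has at least three distinct prime factors. The standard conductor formula
\[
 h(-n) \;=\; h(-n_{0})\cdot m\prod_{p\mid m}\!\left(1 - \tfrac{\chi_{-n_{0}}(p)}{p}\right)
\]
expresses $h(-n)$ as an integer multiple of $h(-n_{0})$, so the squarefree case gives $4 \mid h(-n)$ in general. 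The main obstacle I expect is the careful identification of the "$h(-n)$" in Gauss's formula with the \emph{form} class number (equivalently, the class number of the order of discriminant $-n$ in $\newq(\sqrt{-n_{0}})$) rather than the field class number, and the small bookkeeping needed to transfer $4$-divisibility from the fundamental to the non-fundamental discriminant via the conductor formula.
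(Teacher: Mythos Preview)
Your reduction of the general case to the squarefree case is broken. You write $n=m^{2}n_{0}$ with $n_{0}$ squarefree and then assert that ``$n_{0}$ shares its set of prime divisors with $n$, so $n_{0}$ also has at least three distinct prime factors.'' This is false: the primes dividing $n_{0}$ are exactly those occurring to \emph{odd} exponent in $n$, which can be a proper subset of the primes dividing $n$. A concrete counterexample is $n=3\cdot 5^{2}\cdot 7^{2}=3675$, which is $\equiv 3\pod{8}$ and has three distinct prime divisors, while $n_{0}=3$ has only one; here $h(-n_{0})=h(-3)=1$, so your conductor-formula argument gives no information about $4$-divisibility of $h(-n)$. (In fact $h(-3675)=12$ is divisible by $4$, so the lemma is true for this $n$; it is only your proof that fails.)

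The paper avoids this detour entirely. It applies Gauss's genus theory \emph{directly} to forms of discriminant $-n$, without reducing to the fundamental discriminant: for any odd $n\equiv 3\pod{4}$ the number of genera of forms of discriminant $-n$ is $2^{m-1}$, where $m$ is the number of distinct primes dividing $n$ (not $n_{0}$), and since the principal genus coincides with the subgroup of squares, the class group of that order surjects onto $(\newz/2)^{m-1}$. With $m\ge 3$ this gives $4\mid h(-n)$ at once. If you replace your squarefree reduction by this direct appeal to genus theory for the (possibly non-maximal) order $\newz\!\left[\frac{1+\sqrt{-n}}{2}\right]$, your argument becomes correct and coincides with the paper's.
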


\begin{proof}
Let $\newo=\newz\left[\frac{1+\sqrt{-n}}{2}\right]$. A result of Gauss, \cite{2}, put into modern language, is that the number of primitive representations of $n$ by the form $x^{2}+y^{2}+z^{2}$ is $24\cdot(\mathit{the\ number\ of\ invertible\ ideal\ classes\ in\ }O)$. So the number of ways of writing $n$ primitively as $(\mathit{square})+(\mathit{square})+(\mathit{square})$ is $3\cdot(\mathit{the\ number\ of\ invertible\ ideal\ classes})$, and it suffices to show that 4 divides this number. Now Gauss developed a genus theory for binary quadratic forms which tells us that the group of invertible ideal classes maps onto a product of $m-1$ copies of $\newz/2$, where $m$ is the number of different primes dividing $n$. Since $m\ge 3$ we're done.
\end{proof}

\begin{theorem}
\label{theorem2.3}
If $n\equiv 3\pod{8}$ and there are 3 or more primes that occur to odd exponent in the prime factorization of $n$, then $n$ is not in $B$.
\end{theorem}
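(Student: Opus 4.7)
The plan is to invoke Lemma~\ref{lemma2.1}, which reduces the theorem to showing $4\mid R_{1}$ and $4\mid R_{2}$. The two divisibilities rely on the arithmetic of different imaginary quadratic orders, so I would treat them in turn. For $R_{1}$ the idea is to bootstrap Lemma~\ref{lemma2.2} from primitive to arbitrary representations: every ordered triple of squares summing to $n$ factors uniquely as $d^{2}$ times a primitive triple summing to $m:=n/d^{2}$, where $d$ is the gcd of the three square roots. Because $n$ is odd, $d$ is odd and $d^{2}\equiv 1\pod{8}$, so $m\equiv 3\pod{8}$. Moreover, any prime dividing $n$ to an odd exponent continues to divide $m$ to an odd positive exponent, so $m$ still has at least three distinct prime divisors. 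Lemma~\ref{lemma2.2} thus applies to each $m$, yielding $4\mid R_{1}^{\mathit{prim}}(n/d^{2})$, and summing over $d$ with $d^{2}\mid n$ gives $4\mid R_{1}$.

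For $R_{2}$, the plan is to use the arithmetic of $\newz[\sqrt{-2}]$, which has class number $1$ and unit group $\{\pm 1\}$. Since $n\equiv 3\pod{8}$, every decomposition $n=s_{1}+2s_{2}$ has $s_{1},s_{2}>0$, and the four sign choices on the square roots together with the two units identify $R_{2}$ with one half the number of ideals of $\newz[\sqrt{-2}]$ of norm $n$. By multiplicativity, that ideal count is a product over the primes $p\mid n$, each contributing $a_{p}+1$, $1$, or $0$ according as $p$ splits in $\newz[\sqrt{-2}]$, is inert with $a_{p}$ even, or is inert with $a_{p}$ odd. If some prime of odd exponent is inert the product is zero; otherwise every such prime splits and contributes an even factor $a_{p}+1$, and having three or more such primes forces divisibility by $8$. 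Either way $4\mid R_{2}$.

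The most delicate step I expect is the bootstrap in the $R_{1}$ argument: one must verify that passing from $n$ to $n/d^{2}$ preserves both the congruence $\equiv 3\pod{8}$ and the bound of three distinct prime divisors, so that Lemma~\ref{lemma2.2} can actually be applied to every summand. Once that is in hand, neither half is hard, and the two halves combine through Lemma~\ref{lemma2.1} to yield the theorem.
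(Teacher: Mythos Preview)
Your proposal is correct and follows the paper's proof almost exactly: both invoke Lemma~\ref{lemma2.1}, derive $4\mid R_{1}$ by summing Lemma~\ref{lemma2.2} over the square divisors of $n$, and derive $4\mid R_{2}$ by identifying $2R_{2}$ with the ideal count in $\newz[\sqrt{-2}]$ and noting that each of the three (or more) primes of odd exponent contributes an even factor to the multiplicative formula. Your explicit verification that each $n/d^{2}$ is still $\equiv 3\pod{8}$ and still has at least three distinct prime divisors is exactly the check the paper leaves implicit.
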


\begin{proof}
By Lemma \ref{lemma2.2}, whenever $a^{2}$ divides $n$, the number of ways of writing $n/a^{2}$ primitively as $(\mathit{square})+(\mathit{square})+(\mathit{square})$ is divisible by 4. Summing over $a$ we find that 4 divides $R_{1}$.  Furthermore, by Lemma \ref{lemma3.3}, $2R_{2}$ is the number of ideals of norm $n$ in $\newz\left[\sqrt{-2}\right]$. This number is $\sum\jacobi{-2}{d}$ where $\jacobi{}{}$ is the Jacobi symbol, and $d$ runs over the divisors of $n$. Since $\jacobi{}{}$ is multiplicative, the sum is a product of integer factors, one coming form each prime dividing $n$. Also, a prime having odd exponent in the factorization contributes an even factor. Since there are at least 3 such primes, 8 divides $2R_{1}$, 4 divides $R_{1}$, and we use Lemma \ref{lemma2.1}.
\end{proof}

\begin{theorem}
\label{theorem2.4}
The number of $n$ in $B$ that are $\le x$ and $\equiv 3\pod{8}$ is \linebreak $O\left(x\log\log(x)/\log(x)\right)$.
\end{theorem}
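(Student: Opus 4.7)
The plan is to reduce everything to Theorem \ref{theorem2.3} and then apply classical estimates for integers with few prime factors. By Theorem \ref{theorem2.3}, every $n\equiv 3\pod 8$ that lies in $B$ has the property that at most two primes divide $n$ to an odd power. Equivalently, writing $n=a^{2}m$ with $m$ squarefree, $m$ has at most two prime factors. Since $n\equiv 3\pod 8$ cannot be a square, $m\ne 1$, so $m$ is either a prime $p$ or a product $pq$ of two distinct primes (both odd).

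I would then split the count into the two cases. For the case $n=a^{2}p$, the number of such $n\le x$ is at most
\[
\sum_{a\le\sqrt{x}}\pi\!\left(\frac{x}{a^{2}}\right).
\]
Splitting the sum at $a=x^{1/4}$ and using $\pi(y)=O(y/\!\log y)$ for $a\le x^{1/4}$ (where $\log(x/a^{2})\ge\tfrac{1}{2}\log x$) and the trivial bound $\pi(y)\le y$ for $x^{1/4}<a\le\sqrt x$, this contribution is $O(x/\!\log x)$, which is absorbed into the claimed bound.

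For the case $n=a^{2}pq$ with distinct primes $p,q$, I would invoke Landau's classical estimate $\pi_{2}(y):=\#\{n\le y:n=pq\}=O(y\log\log y/\!\log y)$. The count is at most
\[
\sum_{a\le\sqrt{x}}\pi_{2}\!\left(\frac{x}{a^{2}}\right).
\]
Splitting again at $a=x^{1/4}$, the main range $a\le x^{1/4}$ contributes
\[
O\!\left(\frac{\log\log x}{\log x}\sum_{a\le x^{1/4}}\frac{x}{a^{2}}\right)=O\!\left(\frac{x\log\log x}{\log x}\right),
\]
while the tail $a>x^{1/4}$ again gives $O(x^{3/4})$ by the trivial bound $\pi_{2}(y)\le y$. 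Summing both cases yields the stated $O(x\log\log x/\!\log x)$.

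The only nontrivial ingredient is Landau's asymptotic for the counting function of products of two primes, which is standard and can simply be cited; no real obstacle is expected. The rest is the routine dyadic splitting of the sum over $a$ to separate a main term (handled by PNT-type estimates) from a tail (handled trivially).
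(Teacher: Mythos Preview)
Your argument is correct and is exactly the approach the paper takes: invoke Theorem \ref{theorem2.3} to see that any $n\equiv 3\pod 8$ in $B$ is a square times a prime or a square times a product of two primes, then cite the classical bound $\pi_{2}(x)=O(x\log\log x/\log x)$. The paper's proof simply asserts that the conclusion ``follows easily'' from these two facts; you have supplied the routine summation over $a$ with the split at $a=x^{1/4}$ that makes this explicit.
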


\begin{proof}
Let $\pi_{2}(x)$ be the number of $n\le x$ that are a product of 2 primes. It's well-known that $\pi_{2}(x)$ is $O\left(x\log\log(x)/\log(x)\right)$. By Theorem \ref{theorem2.3} an element of $B$ that is $\equiv 3\pod{8}$ is either the product of a single prime and a square, or of two primes and a square. The result follows easily.
\end{proof}

\section{A density result for $\bm{n\equiv 7\pod{16}}$}
\label{section3}

For $n\equiv 7\pod{16}$ we show that $n$ is in $B$ if and only if the number of ways to write $2n$ as $(\mathit{square})+(\mathit{square})+(\mathit{square})$ is $\equiv 2\pod{4}$, and arguing as in the last section, prove the analogue to Theorem \ref{theorem2.4}.

\begin{lemma}
\label{lemma3.1}
If $n\equiv 1\pod{8}$ then the number of ideals $U$ of norm $n$ in $\newz\left[\sqrt{-2}\right]$ is congruent mod 4 to the number of ideals $V$ of norm $n$ in $\newz[\consti]$ unless $n=A^{2}$ with $A\equiv \pm 3\pod{8}$.
\end{lemma}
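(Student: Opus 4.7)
The plan is to apply the standard ideal-count formulas $U = \sum_{d\mid n}\jacobi{-2}{d}$ and $V = \sum_{d\mid n}\jacobi{-1}{d}$, the sums running over the (automatically odd) divisors of $n$. Writing $\chi_{-4}(d)=\jacobi{-1}{d}$, $\chi_{-8}(d)=\jacobi{-2}{d}$, and $\chi_{8}(d)=\jacobi{2}{d}$, the key identity I will exploit is the factorization $\chi_{-8}=\chi_{-4}\cdot\chi_{8}$.

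From this factorization,
\[
U - V \;=\; \sum_{d\mid n}\chi_{-4}(d)\bigl(\chi_{8}(d)-1\bigr),
\]
so only divisors $d\equiv\pm 3\pod{8}$ contribute. Inspecting the four odd residues, each $d\equiv 3\pod{8}$ contributes $+2$ and each $d\equiv 5\pod{8}$ contributes $-2$. Writing $N_{r}$ for the number of divisors of $n$ congruent to $r\pod{8}$, the identity collapses to $U - V = 2(N_{3}-N_{5})$.

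The heart of the argument is then a parity count via the divisor involution $d\leftrightarrow n/d$. Since $n\equiv 1\pod{8}$ and $3^{2}\equiv 5^{2}\equiv 1\pod{8}$, this involution preserves each of the residue classes $3$ and $5$ modulo $8$ individually. Hence $N_{3}$ and $N_{5}$ are each even, except when the fixed point $d = n/d = \sqrt{n}$ lies in the corresponding class: $N_{3}$ is odd exactly when $n = A^{2}$ with $A\equiv 3\pod{8}$, and $N_{5}$ is odd exactly when $n = A^{2}$ with $A\equiv 5\pod{8}$. Outside this exceptional case both $N_{3}$ and $N_{5}$ are even, so $4$ divides $U - V$, as required.

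I expect no real obstacle beyond carefully tracking residues modulo $8$; the only subtlety worth flagging is that in the excluded case precisely one of $N_{3},N_{5}$ becomes odd, forcing $U - V\equiv 2\pod{4}$, which shows that the exception stated in the lemma is sharp.
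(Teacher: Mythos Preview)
Your proof is correct, and it takes a genuinely different route from the paper's.

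The paper proceeds multiplicatively: it writes each of $U$ and $V$ as an Euler product over primes dividing $n$, and then splits into cases according to how many primes occur to odd exponent. When two or more such primes occur, $4\mid U$ and $4\mid V$; when exactly one occurs, the constraint $n\equiv 1\pmod 8$ forces that prime to be $\equiv 1\pmod 8$, and one compares the two local factors directly; the perfect-square case $n=A^{2}$ is handled separately by computing $UV$ in $(\mathbb{Z}/4)^{\times}$ prime by prime. Your argument instead stays additive: the identity $\chi_{-8}=\chi_{-4}\chi_{8}$ gives $U-V=2(N_{3}-N_{5})$ at once, and the involution $d\leftrightarrow n/d$ (which, since $3^{2}\equiv 5^{2}\equiv 1\pmod 8$, preserves each residue class) pairs off divisors to force $N_{3}$ and $N_{5}$ even, with the sole possible fixed point $d=\sqrt{n}$ isolating exactly the exceptional case $n=A^{2}$, $A\equiv\pm 3\pmod 8$. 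Your approach is more uniform and avoids the three-way case split; the paper's approach, by contrast, makes the role of the prime factorization (and in particular the ``at most two primes to odd exponent'' phenomenon used later in the paper) more visible.
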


\begin{proof}
$U=\sum\jacobi{-2}{d}$ and $V=\sum\jacobi{-1}{d}$ where the sums are over the divisors of $n$. Since $\jacobi{}{}$ is multiplicative, $U$ (resp.\ $V$) is a product of contributions, one for each prime dividing $n$. A contribution is even if the prime occurs to odd exponent in the factorization of $n$, and is odd otherwise. In particular if 2 or more $p$ appear to odd exponent, then 4 divides $U$ and $V$. Next suppose there is exactly one prime $p$ occurring with odd exponent and that the exponent is $c$. Since $n\equiv 1\pod{8}$, $p\equiv 1\pod{8}$, and $\jacobi{-2}{p}=\jacobi{-1}{p}=1$. So $p$ makes a contribution of $c+1$ both to $U$ and to $V$. Since all the other contribution are odd, $U\equiv V\equiv 0\pod{4}$ when $c\equiv 3\pod{4}$, and $U\equiv V\equiv 2\pod{4}$ when $c\equiv 1\pod{4}$.

It remains to analyze the case $n=A^{2}$. In this case $U$ and $V$ are odd, and we are reduced to showing: if $A\equiv \pm 1\pod{8}$ then $UV\equiv 1\pod{4}$, while if $A\equiv \pm 3\pod{8}$, then $UV\equiv 3\pod{4}$. Consider $UV$ as an element of the multiplicative group $\{1,3\}$ of $\newz/4$. $UV$ is a product of contributions, one for each prime dividing $A$. A $p\equiv \pm 1\pod{8}$ makes the same contribution to $U$ as to $V$ and so does not contribute to the product.  If on the other hand $p\equiv\pm 3\pod{8}$ and has exponent $c$ in the factorization of $A$ then the contribution it makes to $UV$ is $(2c+1)\cdot 1$ when $p\equiv 3\pod{8}$ and $1\cdot(2c+1)$ when $p\equiv -3\pod{8}$. In other words the contribution is $-1$ precisely when $c$ is odd. This tells us that $UV\equiv 1\pod{4}$ when the number of primes $\equiv \pm 3\pod{8}$ with odd exponent in the factorization of $A$ is even, and that $UV\equiv 3\pod{4}$ when this number is odd. But in the first case $A\equiv\pm 1\pod{8}$, while in the second $A\equiv\pm 3\pod{8}$.
\end{proof}

\begin{definition}
\label{definition3.2}
Suppose $n$ is odd. $U_{1}$ is the number of ways of writing $n$ as $(\mathit{square})+2(\mathit{square})$ while $U_{2}$ is the number of ways of writing $n$ as $(\mathit{square})+4(\mathit{square})$.
\end{definition}

\begin{lemma}
\label{lemma3.3}
The number of ideals $U$ of $\newz\left[\sqrt{-2}\right]$ of norm $n$ is $2U_{1}-1$ when $n$ is a square and $2U_{1}$ otherwise. The number of ideals $V$ of $\newz[\consti]$  of norm $n$ is $2V_{1}-1$ when $n$ is a square and $2V_{1}$ otherwise.
\end{lemma}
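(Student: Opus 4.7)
The plan is to exploit the fact that both $\newz[\sqrt{-2}]$ and $\newz[\consti]$ are principal ideal domains. Every ideal of norm $n$ is therefore generated by a ring element of norm $n$, and two such elements generate the same ideal precisely when they differ by a unit. Counting ideals of norm $n$ thus reduces to counting orbits, under the unit group, of integer solutions $(a,b)$ to the relevant norm equation $a^{2}+2b^{2}=n$ or $a^{2}+b^{2}=n$.

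For $U$, the units of $\newz[\sqrt{-2}]$ are $\{\pm 1\}$, so each orbit $\{(a,b),(-a,-b)\}$ has exactly $2$ elements (since $n$ odd forces $a\neq 0$, no orbit collapses). Each representation with $a,b\geq 0$ and $b>0$ gives one orbit, and the conjugate pair $(a,-b)$ gives a second distinct orbit, corresponding to the conjugate ideal. When $b=0$, which happens precisely when $n=a^{2}$ is a perfect square, the would-be conjugate pair is a single self-conjugate ideal. Summing the contributions of the $U_{1}$ representations counted by Definition \ref{definition3.2} then yields $U=2U_{1}$ when $n$ is not a square and $U=2U_{1}-1$ when it is.

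For $V$, the units of $\newz[\consti]$ form the order-$4$ group $\{\pm 1,\pm \consti\}$, so the orbit of $(a,b)$ consists of the four points $(\pm a,\pm b)$ together with $(\pm b,\mp a)$; these are all distinct for $n>0$. Since $n$ is odd, exactly one of $a,b$ is odd, and since multiplication by $\consti$ swaps the parities of the two coordinates, each orbit contains a unique representative of the form $(a,2c)$ with $a>0$ odd and $c\in\newz$. Substituting $b=2c$ turns the norm equation into $a^{2}+4c^{2}=n$, so these representatives are counted (with a sign choice on $c$) by the $V_{1}=U_{2}$ representations of $n$ as $(\mathit{square})+4(\mathit{square})$. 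Exactly as in the previous case, each such representation with $c>0$ gives an ideal together with a distinct conjugate ideal, while $c=0$ (forcing $n$ to be a perfect square) produces a single self-conjugate ideal; this yields $V=2V_{1}$ in general and $V=2V_{1}-1$ when $n$ is a square.

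The delicate step will be the $\newz[\consti]$ bookkeeping: the four-element unit group could easily lead to double- or under-counting, and the clean reduction comes from using the parity constraint forced by $n$ odd to pick the canonical representative $(a,2c)$ with $a$ odd positive in each orbit. Once that reduction is in place, the perfect-square correction enters in exactly the same way as for $\newz[\sqrt{-2}]$.
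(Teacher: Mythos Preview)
Your proof is correct and follows essentially the same route as the paper: each decomposition $n=s_{1}+2s_{2}$ (resp.\ $n=s_{1}+4s_{2}$) produces a conjugate pair of ideals generated by $\sqrt{s_{1}}\pm\sqrt{-2}\,\sqrt{s_{2}}$ (resp.\ $\sqrt{s_{1}}\pm 2\consti\sqrt{s_{2}}$), the pair collapsing exactly when the second summand vanishes, i.e.\ when $n$ is a square. The paper dispatches the $\newz[\consti]$ case with a one-word ``similarly''; your explicit use of the parity constraint (forced by $n$ odd) to select the canonical representative $(a,2c)$ with $a>0$ odd in each four-element unit orbit is precisely the bookkeeping that makes that ``similarly'' work, so you have simply unpacked what the paper leaves implicit.
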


\begin{proof}
Suppose $n=s_{1}+2s_{2}$ with $s_{1}$ and $s_{2}$ squares. Then $\sqrt{s_{1}}+\sqrt{-2}\sqrt{s_{2}}$ and $\sqrt{s_{1}}-\sqrt{-2}\sqrt{s_{2}}$ generate ideals of norm $n$ in $\newz\left[\sqrt{-2}\right]$. These 2 ideals are distinct except when $n$ is a square and $s_{2}=0$. Also every ideal of norm $n$ comes from exactly one such decomposition of $n$. This gives the first result and the proof of the second is similar.
\end{proof}

Lemmas \ref{lemma3.1} and \ref{lemma3.3} immediately give:

\begin{lemma}
\label{lemma3.4}
If $n\equiv 1\pod{16}$, then $U_{1}\equiv V_{1}\pod{2}$.
\end{lemma}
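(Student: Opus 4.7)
The plan is to trace the implication from Lemmas \ref{lemma3.1} and \ref{lemma3.3} that the author signals will yield this conclusion \emph{immediately}. The only substantive point is verifying that the exceptional case of Lemma \ref{lemma3.1} cannot occur under the strengthened hypothesis $n\equiv 1\pod{16}$.

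First I would note that $n\equiv 1\pod{16}$ implies $n\equiv 1\pod{8}$, so Lemma \ref{lemma3.1} applies and delivers $U\equiv V\pod{4}$, except possibly in the excluded case $n=A^{2}$ with $A\equiv\pm 3\pod{8}$. To rule that exception out, I would record the elementary fact that an odd square $A^{2}$ is congruent to $1\pod{16}$ when $A\equiv\pm 1\pod{8}$ and to $9\pod{16}$ when $A\equiv\pm 3\pod{8}$. Hence $n\equiv 1\pod{16}$ forces any square representation $n=A^{2}$ to have $A\equiv\pm 1\pod{8}$, so the exceptional case in Lemma \ref{lemma3.1} does not arise, and $U\equiv V\pod{4}$ holds unconditionally under our hypothesis.

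Next I would apply Lemma \ref{lemma3.3} in the unified form $U=2U_{1}-\varepsilon$ and $V=2V_{1}-\varepsilon$, where $\varepsilon\in\{0,1\}$ is the same correction in both formulas (equal to $1$ precisely when $n$ is a square). Subtracting gives $U-V=2(U_{1}-V_{1})$, and combining with $U\equiv V\pod{4}$ yields $2(U_{1}-V_{1})\equiv 0\pod{4}$, i.e., $U_{1}\equiv V_{1}\pod{2}$, which is the desired conclusion.

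The hardest point — such as it is — is just the one-line observation about odd squares modulo $16$ that lets us sidestep the exceptional case of Lemma \ref{lemma3.1}; everything else is arithmetic bookkeeping.
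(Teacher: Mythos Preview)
Your argument is correct and is exactly the elaboration the paper intends: the paper gives no proof at all beyond the line ``Lemmas \ref{lemma3.1} and \ref{lemma3.3} immediately give,'' and your write-up supplies precisely the two missing observations --- that $n\equiv 1\pmod{16}$ rules out the exceptional case $n=A^{2}$ with $A\equiv\pm 3\pmod 8$ (since such $A$ have $A^{2}\equiv 9\pmod{16}$), and that subtracting the two formulas of Lemma \ref{lemma3.3} cancels the square-correction term to give $U-V=2(U_{1}-V_{1})$.
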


\begin{lemma}
\label{lemma3.5}
If $n\equiv 1\pod{16}$, then the coefficient of $x^{n}$ in $\frac{1}{g^{7}}$ is 1 if and only if $n$ is a square.
\end{lemma}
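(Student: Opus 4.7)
The plan is to solve for $c_n:=[x^n]\frac{1}{g^{7}}$ by setting up a linear recursion from the identity $g^{9}=g^{16}\cdot\frac{1}{g^{7}}$. In characteristic $2$ one has $g^{16}(x)=g(x^{16})=\sum_{s\ge 0}x^{16s^{2}}$ (iterated Frobenius), so extracting coefficients of $x^{n}$ gives
\[
T(n)\ :=\ [x^{n}]\,g^{9}\ =\ \sum_{s\ge 0,\ 16s^{2}\le n}c_{n-16s^{2}}.
\]
Moreover $g^{9}=g(x)g(x^{8})$, so $T(n)$ is the mod-$2$ count of pairs $(i,j)\in\newz_{\ge 0}^{2}$ with $i^{2}+8j^{2}=n$. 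Alongside this I introduce $N_{A}(n):=\#\{(k,t)\in\newz_{\ge 0}^{2}:k^{2}+16t^{2}=n\}=\sum_{t\ge 0}[n-16t^{2}\text{ is a square}]$, which is exactly what the right-hand side of the recursion should simplify to if the lemma is correct.

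Granting the key congruence $T(n)\equiv N_{A}(n)\pmod{2}$ for $n\equiv 1\pmod{16}$, I will deduce the lemma by strong induction on such $n$. The base case $n=1$ is immediate since only $s=0$ contributes to both sums and $T(1)=1=N_{A}(1)$. For the inductive step, every term $n-16s^{2}$ with $s\ge 1$ is positive, strictly less than $n$, and still $\equiv 1\pmod{16}$, so the inductive hypothesis lets me replace each $c_{n-16s^{2}}$ ($s\ge 1$) by $[n-16s^{2}\text{ is a square}]$. This turns $T(n)-c_{n}$ into $N_{A}(n)-[n\text{ is a square}]$, and the congruence then forces $c_{n}\equiv[n\text{ is a square}]\pmod{2}$, as required.

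The heart of the argument is thus the parity coincidence $T(n)\equiv N_{A}(n)\pmod{2}$, which I will prove by reducing both sides to the $U_{1}$ and $V_{1}$ of Definition~\ref{definition3.2}/Lemma~\ref{lemma3.3} and invoking Lemma~\ref{lemma3.4}. Setting $b=2j$ identifies $T(n)$ with the number of representations $n=a^{2}+2b^{2}$ having $b$ even; a short mod-$16$ inspection shows $b$ is forced to be even ($b$ odd would give $a^{2}\equiv -1\pmod{16}$, impossible), so $T(n)=U_{1}(n)$. Similarly, substituting $u=4t$ identifies $N_{A}(n)$ with the number of representations $n=k^{2}+u^{2}$ with $4\mid u$, and a parallel mod-$16$ check shows $4\mid u$ is forced ($u\equiv 2\pmod{4}$ would give $k^{2}\equiv 13\pmod{16}$, impossible), so $N_{A}(n)=U_{2}(n)=V_{1}(n)$. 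Lemma~\ref{lemma3.4} then yields $U_{1}(n)\equiv V_{1}(n)\pmod{2}$, closing the argument.

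The main obstacle, as I see it, is noticing that $T$ and $N_{A}$—which at first sight are unrelated sum-of-squares counts—collapse (for $n\equiv 1\pmod{16}$) to the $U_{1}$ and $V_{1}$ already studied. Once that identification is spotted, Lemma~\ref{lemma3.4} does all the arithmetic work, but without it the two sides of the congruence look structurally disconnected.
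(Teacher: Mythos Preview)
Your argument is correct and follows essentially the same route as the paper: both identify, for $n\equiv 1\pmod{16}$, the coefficient of $x^{n}$ in $g^{9}$ with $U_{1}\bmod 2$ and the coefficient of $x^{n}$ in $g^{17}$ with $V_{1}\bmod 2$, and then invoke Lemma~\ref{lemma3.4}. The only cosmetic difference is the final deduction: where you run a strong induction along the convolution identity $g^{9}=g^{16}\cdot(1/g^{7})$, the paper packages the same step as an $S$-linear projection (with $S=\newz/2[[x^{16}]]$) to pass directly from $\pr(g^{9})=\pr(g^{17})$ to $\pr(1/g^{7})=\pr(g)$.
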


\begin{proof}
Since $n\equiv 1\pod{8}$, the number of ways $U_{1}$ of writing $n$ as $(\mathit{square})+2(\mathit{square})$ is the number of ways of writing $n$ as $(\mathit{square})+8(\mathit{square})$. So the image of $U_{1}$ in $\newz/2$ is the coefficient of $x^{n}$ in $g\cdot g^{8}=g^{9}$. Similarly, the image of $V_{1}$ in $\newz/2$ is the coefficient of $x^{n}$ in $g\cdot g^{16}=g^{17}$. Lemma \ref{lemma3.4} then tells us that for $n\equiv 1\pod{16}$ the coefficients of $x^{n}$ in $g^{9}$ and in $g^{17}$ are equal.

Now let $S\subset A$ be $\newz/2 [[x^{16}]]$. As $S$-module $A$ is the direct sum of the $x^{j}S$, $0\le j\le 15$. Let $\pr:A\rightarrow xS$ be the $S$-linear map that is the identity on $xS$ and 0 on the other summands. The last paragraph tells us that $\pr(g^{9})=\pr(g^{17})$. Since $\frac{1}{g^{16}}$ is in $S$, $\pr\left(\frac{1}{g^{7}}\right)=\pr(g)$. But as $n\equiv 1\pod{16}$, the coefficient of $x^{n}$ in $\pr(g)$ is the coefficient of $x^{n}$ in $g$, giving the result.
\end{proof}

\begin{theorem}
\label{theorem3.6}
If $n\equiv 7\pod{16}$ then $n$ is in $B$ if and only if the number of ways of writing $n$ as $(\mathit{square})+2(\mathit{square})+4(\mathit{square})$ is odd.
\end{theorem}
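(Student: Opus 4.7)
The plan is to factor $\frac{1}{g}=g^{2} \cdot g^{4} \cdot \frac{1}{g^{7}}$ and reduce to a direct application of Lemma \ref{lemma3.5}, after a mod $16$ residue pruning. Expanding the factorization shows that $b_{n}$ (the coefficient of $x^{n}$ in $\frac{1}{g}$) equals, mod $2$, the number of decompositions $n = 2b^{2} + 4c^{2} + m$ for which the coefficient of $x^{m}$ in $\frac{1}{g^{7}}$ is $1$.

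The key preliminary observation is that the support of $\frac{1}{g^{7}}$ is fairly sparse. Since $g^{8} = \sum x^{8k^{2}}$ lies in $\newz/2[[x^{8}]]$ and has constant term $1$, so does $\frac{1}{g^{8}}$. Writing $\frac{1}{g^{7}} = g \cdot \frac{1}{g^{8}}$ then forces the coefficient of $x^{m}$ in $\frac{1}{g^{7}}$ to vanish unless $m \equiv 0$, $1$, or $4 \pod{8}$, these being the residues of squares mod $8$.

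The residue analysis is now easy. For $n \equiv 7 \pod{16}$ and $r = 2b^{2} + 4c^{2}$, the integer $m = n - r$ is odd, so the support constraint forces $m \equiv 1 \pod{8}$, hence $r \equiv 6 \pod{8}$. Since $2b^{2} \pod{8} \in \{0, 2\}$ and $4c^{2} \pod{8} \in \{0, 4\}$, achieving $r \equiv 6 \pod{8}$ requires $b$ and $c$ both odd. But then $2b^{2} \equiv 2 \pod{16}$ and $4c^{2} \equiv 4 \pod{16}$, giving $r \equiv 6 \pod{16}$ and hence $m \equiv 1 \pod{16}$.

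Now Lemma \ref{lemma3.5} kicks in: for such $m$, the coefficient of $x^{m}$ in $\frac{1}{g^{7}}$ is $1$ precisely when $m$ is a square, and a square congruent to $1 \pod{16}$ has the form $a^{2}$ with $a \equiv \pm 1 \pod{8}$. Putting everything together, $b_{n}$ equals the parity of the number of triples $(a, b, c)$ with $a \equiv \pm 1 \pod{8}$, $b$ and $c$ odd, and $n = a^{2} + 2b^{2} + 4c^{2}$. A final mod $16$ check (if instead $a \equiv \pm 3 \pod{8}$, then $2b^{2} + 4c^{2}$ would have to be $\equiv 14 \pod{16}$, which is impossible since $2b^{2} + 4c^{2} \pod{16} \in \{0, 2, 4, 6, 8, 12\}$) shows that these side conditions are automatic for any representation of $n \equiv 7 \pod{16}$ in the form $(\mathit{square}) + 2(\mathit{square}) + 4(\mathit{square})$, so $b_{n}$ is the parity of the total number of such representations, as required. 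The only nontrivial ingredient is the support observation for $\frac{1}{g^{7}}$; everything else is mod $16$ bookkeeping.
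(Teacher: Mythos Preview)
Your proof is correct and follows the same approach as the paper: factor $\frac{1}{g}=g^{2}\cdot g^{4}\cdot\frac{1}{g^{7}}$, use $\frac{1}{g^{7}}=g/g^{8}$ to force $m\equiv 1\pmod{8}$, upgrade to $m\equiv 1\pmod{16}$ by a mod~$16$ check, and invoke Lemma~\ref{lemma3.5}. You spell out the congruence bookkeeping (the parities of $b,c$ and the residue set $\{0,2,4,6,8,12\}$ for $2b^{2}+4c^{2}\bmod 16$) more explicitly than the paper, but the argument is the same.
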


\begin{proof}
$\frac{1}{g}=g^{2}\cdot g^{4}\cdot \frac{1}{g^{7}}$. So the coefficient of $x^{n}$ in $\frac{1}{g}$ is the number of ways, modulo 2, of writing $n$ as $2(\mathit{square})+4(\mathit{square})+k$ with the coefficient of $x^{k}$ in $\frac{1}{g^{7}}$ equal to 1. Suppose we have such a representation of $n$. Then $k$ is odd. Since $\frac{1}{g^{7}}=\frac{g}{g^{8}}$ it follows that $k\equiv 1\pod{8}$ A congruence mod 16 argument using the fact that $n\equiv 7\pod{16}$ shows that $k\equiv 1\pod{16}$, and Lemma \ref{lemma3.5} tells us that $k$ is a square. Conversely suppose $n=2(\mathit{square})+4(\mathit{square})+k$, where $k$ is a square. Then $k\equiv 1\pod{8}$ and our congruence mod 16 argument tells us that $k\equiv 1\pod{16}$. By Lemma \ref{lemma3.5}, the coefficient of $x^{k}$ in $\frac{1}{g^{7}}$ is 1, and this completes the proof.
\end{proof}

\begin{lemma}
\label{lemma3.7}
Let $R_{3}$ be the number of ways of writing $2n$ as $(\mathit{square})+(\mathit{square})+(\mathit{square})$. Then if $n\equiv 7\pod{8}$, $R_{3}=6\cdot(\mathit{the\ number\ of\ ways\ of\ writing}$ $n$ $\mathit{ as\ }(\mathit{square})+2(\mathit{square})+4(\mathit{square}))$.
\end{lemma}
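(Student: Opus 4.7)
The plan is to mimic Lemma~\ref{lemma2.1}, constructing a map from triples $(a,b,c)$ of non-negative integers with $a^{2}+b^{2}+c^{2} = 2n$ to triples $(x,y,z)$ representing $n$ as $x^{2}+2y^{2}+4z^{2}$, and showing that each fiber has size $6$.

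I would start with a residue analysis modulo $8$. Since $n \equiv 7 \pod{8}$ forces $2n \equiv 6 \pod{8}$ and squares modulo $8$ lie in $\{0,1,4\}$, every representation $2n = a^{2}+b^{2}+c^{2}$ has exactly one term whose root is $\equiv 2 \pod{4}$ and two terms coming from odd roots. Symmetry in the three positions produces a factor of $3$, reducing the lemma to the statement that the subcount $R_{3}'$ of triples with $a \equiv 2\pod 4$ and $b,c$ positive odd equals $2N$, where $N$ is the number of ways of writing $n$ as $(\mathit{square})+2(\mathit{square})+4(\mathit{square})$.

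Next I would exploit, as in Lemma~\ref{lemma2.1}, the identity $b^{2}+c^{2} = 2\bigl(\tfrac{b+c}{2}\bigr)^{2} + 2\bigl(\tfrac{b-c}{2}\bigr)^{2}$. Writing $a = 2a'$ and setting $u = (b+c)/2 \geq 1$, $v = (b-c)/2$, one obtains $n = 2a'^{2} + u^{2} + v^{2}$ with $u + v = b$ odd, so exactly one of $u, v$ is odd. Taking $y = a'$, $x$ the odd member of $\{u, |v|\}$, and $z$ half the even member produces a representation $n = x^{2}+2y^{2}+4z^{2}$. To count preimages of a fixed $(x,y,z)$: necessarily $a = 2y$, and the construction forces $\{|u|, |v|\} = \{x, 2z\}$; this leaves four candidates $(u,v) \in \{(x, \pm 2z), (2z, \pm x)\}$, and the positivity requirement $b,c \geq 1$ (equivalently $u > |v|$), together with $x \ne 2z$ (since $x$ is odd and $2z$ even), selects exactly two of them.

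The main bookkeeping obstacle will be ruling out degenerate $(x,y,z)$ that could shrink the fiber size. A brief mod-$8$ check should handle this: $y = 0$ would force $n = x^{2}+4z^{2} \in \{0,1,4,5\} \pod 8$, $z = 0$ would force $n = x^{2}+2y^{2} \in \{1,3,5\} \pod 8$ (for $x$ odd), and $x = 0$ is impossible because $n$ is odd. With degenerate triples eliminated, the factor $3$ from position choice combines with the factor $2$ from the fiber to give $R_{3} = 6N$ as required.
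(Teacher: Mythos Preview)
Your approach is essentially correct and deliberately mirrors Lemma~\ref{lemma2.1}, but the paper takes a different and somewhat slicker route. Rather than working modulo~$8$ and building a $2$-to-$1$ map, the paper observes that $2n\equiv 14\pmod{16}$ forces the three squares $s_{1},s_{2},s_{3}$ to be congruent to $1,4,9$ modulo $16$ in some order; since these residues are distinct the squares themselves are distinct, and ordering produces the full factor of~$6$ at once. What remains is then a genuine bijection: fixing $s_{1}\equiv 1$, $s_{2}\equiv 4$, $s_{3}\equiv 9\pmod{16}$, one chooses roots $\sqrt{s_{1}}\equiv 1$ and $\sqrt{s_{3}}\equiv 5\pmod 8$ and checks
\[
n=\Bigl(\tfrac{\sqrt{s_{1}}+\sqrt{s_{3}}}{2}\Bigr)^{2}+2\cdot\tfrac{s_{2}}{4}+4\Bigl(\tfrac{\sqrt{s_{1}}-\sqrt{s_{3}}}{4}\Bigr)^{2},
\]
with an explicit inverse. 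Your argument trades this single bijection for a $3\times 2$ factorisation and a fibre count; both reach the goal, but the modulo-$16$ refinement buys the paper a cleaner correspondence with no degenerate cases to chase.

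One small gap in your sketch: for every representation $(x,y,z)$ of $n$ to lie in the image of your map you need $y$ to be \emph{odd} (so that the preimage $a=2y$ actually satisfies $a\equiv 2\pmod 4$), not merely $y\ne 0$. The modulo-$8$ computation you wrote for $y=0$ in fact rules out all even $y$ (since $2y^{2}\equiv 0\pmod 8$ whenever $y$ is even), so this is a one-word fix rather than a real obstacle.
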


\begin{proof}
Suppose $2n=s_{1}+s_{2}+s_{3}$ with the $s_{i}$ squares. A congruence mod 16 argument shows that the $s_{i}$, in some order, are $\equiv$ 1, 4 and 9 mod 16. So $R_{3}=6\cdot(\mathit{the\ number\ of\ ways\ of\ writing\ } 2n\ \mathit{as\ } s_{1}+s_{2}+s_{3}\ \mathit{with\ the\ } s_{i}\ \mathit{squares}, s_{1}\equiv 1\pod{16}, s_{2}\equiv 4\pod{16}, s_{3}\equiv 9\pod{16})$. Suppose we have such a representation. Then we can choose square roots of $s_{1}$ and $s_{3}$ congruent to 1 and 5 respectively mod 8. Then $n=\left(\frac{\sqrt{s_{1}}+\sqrt{s_{3}}}{2}\right)^{2}+2\left(\frac{s_{2}}{4}\right)+4\left(\frac{\sqrt{s_{1}}-\sqrt{s_{3}}}{4}\right)^{2}=\mathit{(square)}+2\mathit{(square)}+4\mathit{(square)}$. Conversely suppose $n=t_{1}+2t_{2}+4t_{3}$ with the $t_{i}$ squares. Then the $t_{i}$ are odd. Choose square roots of $t_{1}$ and $t_{3}$ that are $\equiv 1\pod{4}$.  Then $2n=\left(2\sqrt{t_{3}}-\sqrt{t_{1}}\right)^{2}+4t_{2}+\left(2\sqrt{t_{3}}+\sqrt{t_{1}}\right)^{2}$, and the three squares appearing in this decomposition are, in order, congruent mod 16 to 1,4 and 9. In this way we get a 1--1 correspondence that establishes the result.
\end{proof}

Combining Theorem \ref{theorem3.6} and Lemma \ref{lemma3.7} we get:

\begin{theorem}
\label{theorem3.8}
An $n\equiv 7\pod{16}$ is in $B$ if and only if the $R_{3}$ of Lemma \ref{lemma3.7} is $\equiv 2\pod{4}$.
\end{theorem}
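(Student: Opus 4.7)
The plan is to simply combine Theorem \ref{theorem3.6} and Lemma \ref{lemma3.7}, with a small parity computation to translate between the two statements. First I would check that the hypotheses of Lemma \ref{lemma3.7} apply: any $n\equiv 7\pod{16}$ is in particular $\equiv 7\pod{8}$, so the lemma gives the identity $R_{3}=6\cdot N$, where $N$ denotes the number of ways of writing $n$ as $(\mathit{square})+2(\mathit{square})+4(\mathit{square})$.

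Next I would reduce everything modulo $4$. Since $6\equiv 2\pod{4}$, we have $R_{3}\equiv 2N\pod{4}$, and this quantity is $\equiv 2\pod{4}$ exactly when $N$ is odd. So the condition ``$R_{3}\equiv 2\pod{4}$'' is equivalent to ``$N$ is odd''.

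Finally, I would invoke Theorem \ref{theorem3.6}, which says precisely that for $n\equiv 7\pod{16}$, being in $B$ is equivalent to $N$ being odd. Chaining the two equivalences yields Theorem \ref{theorem3.8}. There is no real obstacle here: all the analytic and number-theoretic content has already been packaged into Theorem \ref{theorem3.6} and Lemma \ref{lemma3.7}, and the only thing to verify is the trivial arithmetic fact that $6\cdot(\text{odd})\equiv 2\pod{4}$ while $6\cdot(\text{even})\equiv 0\pod{4}$.
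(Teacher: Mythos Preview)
Your proposal is correct and matches the paper's approach exactly: the paper simply states that Theorem~\ref{theorem3.8} follows by combining Theorem~\ref{theorem3.6} and Lemma~\ref{lemma3.7}, and your explicit verification that $n\equiv 7\pod{16}$ implies $n\equiv 7\pod{8}$ and that $R_{3}=6N\equiv 2N\pod{4}$ is odd-multiple-of-$2$ iff $N$ is odd is precisely the intended (trivial) bridge between them.
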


\begin{lemma}
\label{lemma3.9}
Suppose $n\equiv 7\pod{8}$ and is divisible by 3 or more different primes. Then the number of ways of writing $2n$ primitively as $\mathit{(square)}+\mathit{(square)}+\mathit{(square)}$ is divisible by 4.
\end{lemma}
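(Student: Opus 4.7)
The plan is to mimic the proof of Lemma \ref{lemma2.2}, replacing $n$ by $2n$ and using the order $\newo = \newz\left[\sqrt{-2n}\right]$, which has discriminant $-8n$. The version of Gauss's theorem on sums of three squares that applies to integers $\equiv 2\pod{4}$ states that the number of primitive representations of $2n$ by $x^{2}+y^{2}+z^{2}$ is $12\cdot h$, where $h$ is the number of invertible ideal classes in $\newo$. Note the factor $12$ here, as opposed to the factor $24$ that appears for $N\equiv 3\pod{8}$ in Lemma \ref{lemma2.2}.

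Next I translate this into a count of ordered triples of squares summing to $2n$. Since $2n\equiv 6\pod{8}$ and squares are $\equiv 0,1,$ or $4\pod{8}$, three squares can sum to $6$ mod $8$ only if, in some order, they are congruent to $1,1,4\pod{8}$; in particular none of them vanishes. Hence each ordered triple of squares summing to $2n$ accounts for exactly $2^{3}=8$ primitive representations, and the number of primitive ways of writing $2n$ as $(\mathit{square})+(\mathit{square})+(\mathit{square})$ is $\frac{12h}{8}=\frac{3h}{2}$.

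Finally I apply Gauss's genus theory for binary quadratic forms of discriminant $-8n$. The prime divisors of $-8n$ are $2$ together with the $m$ distinct primes of $n$, giving one more prime than in the setting of Lemma \ref{lemma2.2}. Correspondingly, the group of invertible ideal classes of $\newo$ surjects onto a product of $m$ copies of $\newz/2$, and the hypothesis $m\ge 3$ forces $8\mid h$. Thus $\frac{3h}{2}$ is divisible by $12$, in particular by $4$. The main point to get right is the bookkeeping of 2-powers: Gauss's formula loses a factor of $2$ in passing from the $N\equiv 3\pod{8}$ case of Lemma \ref{lemma2.2} to the $N\equiv 2\pod{4}$ case here, but this is exactly compensated by the extra prime $2$ in the discriminant $-8n$, which contributes one more $\newz/2$ to the genus group.
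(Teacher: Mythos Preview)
Your argument is correct and follows the paper's proof essentially verbatim: the paper also takes $\newo=\newz[\sqrt{-2n}]$, invokes Gauss's formula with the factor $12$ (rather than $24$), divides by $8$ after observing that the three squares are nonzero, and then applies genus theory to the $m$ primes dividing $2n$ (your $m$ primes of $n$ plus the prime $2$) to get $8\mid h$. The only cosmetic difference is that the paper works mod $16$ to see the squares are $\equiv 1,4,9$ (hence nonzero and distinct), whereas your mod $8$ analysis gives $\equiv 1,1,4$---but nonzeroness is all that is needed for the factor of $8$, so this changes nothing.
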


\begin{proof}
Let $\newo=\newz\left[\sqrt{-2n}\right]$. When we write $2n$ as $\mathit{(square)}+\mathit{(square)}+\mathit{(square)}$, the summands, being $\equiv$ 1, 4 and 9 mod 16 are non-zero and distinct. So the number we're talking about is $\frac{1}{8}\cdot(\mathit{the\ number\ of\ primitive\ representations\ of\ }$ $2n\ \mathit{by\ the\ form\ } x^{2}+y^{2}+z^{2})$. In \cite{2} Gauss showed that this (in modern language) is $\frac{1}{8}\cdot12\cdot(\mathit{the\ number\ of\ invertible\ ideal\ classes\ in\ }\newo)$. Let $m$ be the number of different primes dividing $2n$. Gauss' genus theory tells us that the group of invertible ideal classes maps onto a product of $m-1$ copies of $\newz/2$. Since $m\ge 4$ we're done.
\end{proof}

\begin{corollary}
\label{corollary3.10}
If $n\equiv 7\pod{8}$ and 3 or more different primes occur to odd exponent in the factorization of $n$, then the $R_{3}$ of Lemma \ref{lemma3.7} is divisible by 4.
\end{corollary}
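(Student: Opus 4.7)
The plan is to reduce Corollary \ref{corollary3.10} to Lemma \ref{lemma3.9} by the standard trick of decomposing representations according to the gcd of their components. Every representation $2n = x^{2}+y^{2}+z^{2}$ has a well-defined $a = \gcd(|x|,|y|,|z|)$ with $a^{2}\mid 2n$, and dividing out gives a primitive representation of $2n/a^{2}$ as a sum of three squares. Consequently
$$R_{3} = \sum_{a^{2}\mid 2n}\bigl(\text{number of primitive representations of }2n/a^{2}\text{ as a sum of three squares}\bigr),$$
and the task is to show every summand is divisible by $4$.

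First I observe that since $n\equiv 7\pod{8}$ is odd, any $a$ with $a^{2}\mid 2n$ must itself be odd, for otherwise $4\mid a^{2}\mid 2n$ would force $2\mid n$. Hence $a^{2}$ actually divides $n$, and since $a$ is odd, $a^{2}\equiv 1\pod{8}$, so that $n/a^{2}\equiv 7\pod{8}$ as well. Thus each $n/a^{2}$ already satisfies the first hypothesis of Lemma \ref{lemma3.9}.

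The crux is verifying that $n/a^{2}$ still has at least $3$ distinct prime divisors. Suppose $p$ occurs to odd exponent $e_{p}$ in $n$. Then $p$ occurs in $n/a^{2}$ to exponent $e_{p} - 2v_{p}(a)$, which is still odd; and since $a^{2}\mid n$ forces $2v_{p}(a)\leq e_{p}$, with $e_{p}$ odd and $2v_{p}(a)$ even, in fact $2v_{p}(a)\leq e_{p}-1$, so this exponent remains strictly positive. Therefore the three (or more) primes occurring to odd exponent in $n$ all continue to divide $n/a^{2}$, providing the required $3$ distinct primes.

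Lemma \ref{lemma3.9}, applied to each $n/a^{2}$, then shows that every summand in the displayed sum is divisible by $4$, and we conclude $4\mid R_{3}$. The only non-routine point is the parity-preservation argument of the preceding paragraph, which keeps the hypothesis of Lemma \ref{lemma3.9} alive as $a$ ranges over divisors; the rest is the familiar descent from total to primitive representation counts.
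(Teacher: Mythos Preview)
Your argument is correct and follows the same approach as the paper: decompose $R_{3}$ as a sum over $a^{2}\mid 2n$ of primitive representation counts of $2n/a^{2}$, then apply Lemma~\ref{lemma3.9} termwise. You simply spell out in detail what the paper leaves implicit---that $a$ must be odd, that $n/a^{2}\equiv 7\pmod 8$, and that the primes of odd exponent in $n$ survive in $n/a^{2}$---so the hypotheses of Lemma~\ref{lemma3.9} hold for each summand.
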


\begin{proof}
For $a^{2}$ dividing $2n$, Lemma \ref{lemma3.9} shows that the number of ways of writing $2n/a^{2}$ primitively as $\mathit{(square)}+\mathit{(square)}+\mathit{(square)}$ is a multiple of 4. Summing over $a$ gives the result.
\end{proof}

\begin{theorem}
\label{theorem3.11}
If $n\equiv 7\pod{16}$ and 3 or more primes occur to odd exponent in the factorization of $n$ then $n$ is not in $B$. Furthermore the number of $n$ in $B$ that are $\le x$ and $\equiv 7\pod{16}$ is $O(x \log \log(x)/\log(x))$.
\end{theorem}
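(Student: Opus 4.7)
The plan is to combine Theorem \ref{theorem3.8} with Corollary \ref{corollary3.10}, and then repeat the counting argument from Theorem \ref{theorem2.4}, replacing mod 8 by mod 16 throughout.

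First, for the non-membership assertion, I would note that $n\equiv 7\pod{16}$ implies $n\equiv 7\pod{8}$, so Corollary \ref{corollary3.10} applies and tells us that $R_{3}\equiv 0\pod{4}$ whenever at least three primes divide $n$ to odd exponent. By Theorem \ref{theorem3.8}, an $n\equiv 7\pod{16}$ belongs to $B$ only when $R_{3}\equiv 2\pod{4}$, so such $n$ cannot lie in $B$. This settles the first sentence.

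Next, for the counting estimate, every $n\equiv 7\pod{16}$ in $B$ must have at most two primes occurring to odd exponent in its factorization. Since $n\equiv 7\pod{16}$ forces $n$ to be odd and not a square (squares reduce mod 16 to $0,1,4$, or $9$, never $7$), the squarefree kernel of $n$ contains exactly one or two distinct primes. Writing $n=a^{2}m$ with $m$ squarefree, $n$ is therefore of the form $pa^{2}$ or $pqa^{2}$. The number of $n\le x$ of the first shape is bounded by $\sum_{a\le\sqrt{x}}\pi(x/a^{2})=O(x/\log x)$, and for the second shape one gets $\sum_{a\le\sqrt{x}}\pi_{2}(x/a^{2})$, which by Landau's bound $\pi_{2}(y)=O(y\log\log y/\log y)$ sums to $O(x\log\log x/\log x)$.

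The only step that needs a little care is handling the sum over the square factor $a$, but this is routine: the dominant contribution comes from small $a$, and the tail where $a^{2}$ exceeds $\sqrt{x}$ is absorbed in a trivial $O(\sqrt{x})$ estimate. So both halves of the theorem fall out without any genuine obstacle, the real work having been done in Theorem \ref{theorem3.8} and Corollary \ref{corollary3.10}.
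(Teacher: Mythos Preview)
Your proposal is correct and follows the paper's own approach: the first assertion is obtained by combining Theorem~\ref{theorem3.8} with Corollary~\ref{corollary3.10} (noting $7\pmod{16}\Rightarrow 7\pmod{8}$), and the counting is exactly the argument of Theorem~\ref{theorem2.4}, which you have merely spelled out in more detail.
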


\begin{proof}
Theorem \ref{theorem3.8} and Corollary \ref{corollary3.10} give the first result, and we argue as in Theorem \ref{theorem2.4} to get the second.
\end{proof}

Combining Theorems \ref{theorem1.3}, \ref{theorem2.4} and \ref{theorem3.11} we get:

\begin{theorem}
\label{theorem3.12}
The number of $n$ in $B$ that are $\le x$ and $\not\equiv 15\pod{16}$ is $O(x \log \log(x)/\log(x))$. In particular the upper density of $B$ is $\le\frac{1}{16}$.
\end{theorem}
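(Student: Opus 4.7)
The plan is to partition the integers $n \le x$ with $n \not\equiv 15\pod{16}$ into residue classes mod $16$ and apply the three counting bounds already established in the paper. A quick case check shows that every residue $r \in \{0,1,\dots,14\}$ falls into one of three categories: either (i) $r \not\equiv 3\pod{4}$, which covers the twelve residues $0,1,2,4,5,6,8,9,10,12,13,14$; or (ii) $r \equiv 3\pod{8}$, which picks up $r=3$ and $r=11$; or (iii) $r \equiv 7\pod{16}$, which handles the remaining residue $r=7$. Only $r=15$ is missed, and it is excluded by hypothesis.

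For category (i), Theorem \ref{theorem1.3} bounds the count of relevant $n \in B$ with $n \le x$ by $O(x/\log(x))$. For category (ii), Theorem \ref{theorem2.4} gives the bound $O(x\log\log(x)/\log(x))$. For category (iii), Theorem \ref{theorem3.11} gives the same order bound. Summing these three contributions, the dominant order $O(x\log\log(x)/\log(x))$ yields the first assertion of the theorem.

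For the density claim, I would note that $(x\log\log(x))/\log(x) = o(x)$, so the $n \in B$ with $n \not\equiv 15\pod{16}$ contribute upper density $0$ to $B$. Since the residue class $15\pod{16}$ has natural density exactly $1/16$, the upper density of $B$ is bounded above by $1/16$. There is really no substantive obstacle here: the whole argument is a routine synthesis of the three preceding counting theorems, and the only step that needs explicit attention is the exhaustive residue-class check carried out in the first paragraph.
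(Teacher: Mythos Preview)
Your proof is correct and follows exactly the approach indicated in the paper, which simply states that the result is obtained by combining Theorems \ref{theorem1.3}, \ref{theorem2.4} and \ref{theorem3.11}. Your explicit residue-class check and summation of the three bounds is the natural unpacking of that one-line justification.
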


Can one go further? A hope would be to find extensions of Theorems \ref{theorem1.1}, \ref{theorem1.2} and \ref{theorem1.4} of this note that hold for $n\equiv 7\pod{16}$, $n\equiv 15\pod{32}$, $n\equiv 31\pod{64}$, \ldots . The authors of \cite{1} claim that such extensions exist, but apart from $n\equiv 7\pod{16}$, treated in this section, this seems unlikely. (The formulas they propose are incorrect.)  There seems to be no theoretical evidence supporting the proposition that the $n\equiv 15\pod{16}$ that lie in $B$ form a set of density 0. As we'll see in the next section the empirical evidence supports a quite different proposition.

\section{Computer evidence when $\bm{n\equiv 15\pod{16}}$}

Suppose $x$ is in $N$. There evidently are $x$ positive integers that are $\le 16x$ and $\equiv 15\pod{16}$. Let $\beta=\beta(x)$ be the number of these integers that are in $B$. Virtually nothing is known about the asymptotic growth of $\beta$. But Kevin O'Bryant has calculated $\beta$ for $x\le2^{19}$, and his calculations show, for example:

\begin{enumerate}
\item[(1)] If $x=2^{16}$, the numbers of elements of $B$ that are $\equiv 15\pod{16}$ and lie in $[0,16x],[16x,32x],\ldots,[112x,128x]$, are given respectively by $\frac{x}{2}+13$, $\frac{x}{2}+94$, $\frac{x}{2}-231$, $\frac{x}{2}+207$, $\frac{x}{2}-120$, $\frac{x}{2}+14$, $\frac{x}{2}-270$ and $\frac{x}{2}+7$.

\item[(2)] Suppose $x\le 2^{19}$ and is divisible by $2^{10}$. Then $\beta = \frac{x}{2}+\alpha\sqrt{x}$ with $-1.1<\alpha < .58$. (The minimum of $\alpha$ is attained at $5\cdot 2^{10}$, and the maximum at $37\cdot 2^{10}$.)
\end{enumerate}

This provides evidence for the following ``15 mod 16 conjecture'': For every $\rho >\frac{1}{2}$, $\beta =\frac{x}{2}+O\left(x^{\rho}\right)$.

Note that if the conjecture holds then Theorem \ref{theorem3.12} shows that $B$ has density $\frac{1}{32}$.

\begin{remark}
\label{remark4.1}
There is a related much studied problem. Let $g^{*}$ in $\newz/2[[x]]$ be $1+x+x^{2}+x^{5}+x^{7}+\cdots$ where the exponents are the generalized pentagonal numbers. Just as we used $\frac{1}{g}$ to define $B$ we can use $\frac{1}{g^{*}}$ to define a set $B^{*}$. (A famous result of Euler says that $B^{*}$ consists of all $n$ for which the number of partitions, $p(n)$, of $n$ is odd.)  Let $\beta^{*}=\beta^{*}(x)$ be the number of elements of $B^{*}$ that are $\le x$. Despite extensive study only very weak results about the asymptotic growth of $\beta^{*}$ have been proved. But Parkin and Shanks \cite{4}, on the basis of computer calculations, conjectured that for every $\rho >\frac{1}{2}$, $\beta=\frac{x}{2}+O\left(x^{\rho}\right)$. The resistance of this conjecture to attack suggests however that any proof of our 15 mod 16 conjecture is far off.
\end{remark}


\label{}



\end{document}